\theoremstyle{plain} 
\newtheorem{lemma}[equation]{Lemma} 
\newtheorem{proposition}[equation]{Proposition} 
\newtheorem{theorem}[equation]{Theorem} 
\newtheorem{corollary}[equation]{Corollary}
\theoremstyle{definition}
\newtheorem{definition}[equation]{Definition} 
\theoremstyle{remark}
\newtheorem{remark}[equation]{Remark}
\numberwithin{equation}{section}
\def\norm#1.#2.{\lVert#1\rVert_{#2}}
\def\Norm#1.#2.{\bigl\lVert#1\bigr\rVert_{#2}}
\def\NOrm#1.#2.{\Bigl\lVert#1\Bigr\rVert_{#2}}
\def\NORm#1.#2.{\biggl\lVert#1\biggr\rVert_{#2}}
\def\NORM#1.#2.{\Biggl\lVert#1\Biggr\rVert_{#2}}
\def\D{{\mathbb D}}
\def\ip#1,#2,{\langle #1,#2\rangle}
\def\Ip#1,#2,{\langle#1,#2\rangle}
\def\IP#1,#2,{\langle#1,#2\rangle}
\def\XXint#1#2#3{{\setbox0=\hbox{$#1{#2#3}{\int}$}
     \vcenter{\hbox{$#2#3$}}\kern-.5\wd0}}
\newcommand{\nat}{\mathbb{N}}
\newcommand{\T}{\mathbb{T}}
\newcommand{\LL}{\mathcal{L}}
\newcommand{\eps}{\varepsilon}
\newcommand{\ohne}{\backslash}
\begin{document}

\title[The class $B_\infty$]{The class $B_\infty$}
\subjclass[2010]{Primary: 30H20, 42C40, Secondary: 42A61,  42A50, 47B38}
\keywords{Bekoll\'e weights, reverse H\"older property, integral operator}

\author[A. Aleman]{Alexandru Aleman}
\thanks{}
\address{Centre for Mathematical Sciences, University of Lund, Lund, Sweden}
\email {aleman@maths.lth.se}

\author[S. Pott]{Sandra Pott}
\thanks{}
\address{Centre for Mathematical Sciences, University of Lund, Lund, Sweden}
\email {sandra@maths.lth.se}

\author[M.C. Reguera]{Maria Carmen Reguera}
\thanks{Supported partially by the Vinnova Grant 2014-01434, by Lund University, Mathematics in the Faculty of Science, and by VR grant 2015-05552}
\address{Centre for Mathematical Sciences, University of Lund, Lund, Sweden and School of Mathematics, University of Birmingham, Birmingham, UK}
\email {m.reguera@bham.ac.uk}


\begin{abstract}
We explore properties of the class of B\'ekoll\'e-Bonami weights $B_\infty$ introduced by the authors in a previous work. Although B\'ekoll\'e-Bonami weights are known to be ill-behaved because they do not satisfy a reverse H\"older property, we prove than when restricting to a class of  weights that are ``nearly constant on top halves'', one recovers some of the classical properties of Muckenhoupt weights. We also provide an application of this result to the study of the spectra of certain integral operators.
\end{abstract}

\maketitle

\section{Introduction}
Let $w$ be a weight on the unit disk $\mathbb{D}$,  that is,  a positive measurable function on $\mathbb{D}$.
Following B\'ekoll\'e and Bonami \cite{MR497663}, we say that $w$
belongs to the class $B_p$ for $1<p<\infty$, or equivalently satisfies the $B_p$ condition, if and only if

\begin{equation}
\label{e.bekolle}
B_{p}(w):=\sup_{\substack{I \text{ interval}\\ I\subset \T}}\left(\frac{1}{|Q_{I}|}\int_{Q_{I}}wdA\right) \left(\frac{1}{|Q_{I}|}\int_{Q_{I}}w^{1-p'}dA\right)^{p-1}<\infty.
\end{equation}
Here $Q_{I}$ denotes the Carleson box associated to interval $I\subset \T$,
$$
      Q_I= \{z=re^{i\alpha}: \,\, 1-|I|<r<1, |\theta-\alpha|<|I|/2\, \}.\}.
$$

In this paper we introduce some key properties of the limiting class $B_{\infty}$ of B\'ekoll\'e-Bonami weights,rpreviously introduced by the authors in \cite{APR}. B\'ekoll\'e-Bonami weights fail to satisfy a reverse H\"older property, which has prevented the development of a proper theory for the limiting class. On the other hand, a notion of $B_{\infty}$ appears naturally when looking at sharp estimates for the Bergman projection, as the authors discovered in \cite{APR}. In this paper we complete the picture of the $B_{\infty}$ theory when restricting attention to a very natural class of weights. 

\begin{definition}
We say that a  weight $w$ belongs to the class $B_{\infty}$, if and only if
\begin{equation}
\label{e.binfinity}
B_{\infty}(w):= \sup_{\substack{I \text{ interval}\\ I\subset \T}}\frac{1}{w(Q_{I})}\int_{Q_{I}} M(w1_{Q_{I}})<\infty,
\end{equation}
where $M$ stands for the Hardy-Littlewood maximal function over Carleson cubes and $Q_{I}$ denotes the Carleson cube associated to the interval $I\subset \T$ as defined in Definition \ref{d.carlbox}. 
\end{definition}
This definition of $B_{\infty}$ appears in earlier work of the authors on sharp estimates for the Bergman projection \cite{APR} and is motivated by a version of the Muckenhoupt condition
 $A_{\infty}$ introduced by Fujii \cite{MR0481968} and also studied by Wilson in \cites{MR972707, MR883661, MR2359017}. This $A_\infty$ definition appears in the recent works of Lerner \cite{MR2770437}, Hyt\"onen and P\'erez \cite{MR3092729} and Hyt\"onen and Lacey \cite{MR3129101} among others,
where it is used to find sharp estimates in terms of the Muckenhoupt $A_{p}$ and $A_{\infty}$ constants. Moreover, $B_{\infty}$ is a very natural class to provide sufficient conditions for two-weighted estimates for the Bergman projection in terms of the joint $B_{2}$ condition, as one can see from inspecting the work of the authors in \cite{APR}.

It is easy to see that in general weights in the $B_\infty$ and $B_p$ classes do not have the reverse H\"older property, as an 
arbitrarily small subset $E$ of a Carleson cube $Q_{I}$ can carry the entire weight. In this paper we restrict to a class of weights which are more tractable from this point of view.  These are weights which are almost constant on {\it $\rho$-tops} of Carleson boxes defined below.

\begin{definition} \label{d.tophalf}
Let $I$ be an interval on $\T$, and $0<\rho<1$. The $\rho$-top of $Q_{I}$ is the set
\begin{equation*}
T_{I,\rho}:=\{z=re^{i\alpha}: \,\, 1-|I|<r<1-(1-\rho)|I|, |\theta-\alpha|<|I|/2\, \}.\}.
\end{equation*} 
In the special case in which $\rho=1/2$, we will call $T_{I,1/2}$ the top half of $Q_{I}$ and we will denote it by $T_{I}$.
\end{definition}

In what follows we shall consider strictly positive weights $w$  such that there exists $0<\rho<1$,  and a constant $C_{\rho}$ such that for every interval $I$ on the boundary of $\D$  we have
\begin{equation}\label{e.regcon1}
C_{\rho}^{-1}w(\xi)\leq w(z)\leq C_{\rho}w(\xi) \quad \text{for all } z,\xi\in T_{I,\rho},
\end{equation}

Note   that if $w$ satisfies \eqref{e.regcon1} for some $\rho_0$, then it will satisfy this condition for all $\rho\in (0,1)$. This follows easily from the fact that if $1 > \rho >\rho_0$, then $T_{I,\rho}$ is contained in a finite union of $\rho_0$-tops $$\bigcup_{k=1}^NT_{J_k,\rho_0},$$ where $J_k\subset I, ~J_k\cap J_{k+1}\ne \emptyset$, and $N$ is independent of $I$.  This yields for $z,\xi\in T_{I,\rho}$
$$C_{\rho}^{-N}w(\xi)\leq w(z)\leq C_{\rho}^Nw(\xi).$$

Therefore, in what follows we shall refer to weights satisfying  \eqref{e.regcon1} without specifying the value of $\rho$.
It turns out that for such weights the class $B_\infty$ is very natural and enjoys similar properties as the analogous Muckenhoupt $A_\infty $ class. These properties are collected in the following theorem.
For a countable union of disjoint intervals $E = \cup_i I_i \subset \T$, we write $Q_E = \cup_i Q_{I_i}$.
\begin{theorem} \label{t.main} Let $w$ be a weight satisfying \eqref{e.regcon1}. Then the following are equivalent:
\begin{enumerate}
\item $w \in B_\infty$;
\item  There exists a constant $C>0$ such that
\begin{equation*}\label{e.logest}
\int_{Q_{I}} w(x) \log \left (e + \frac{w(x)|Q_{I}|}{w(Q_{I})} \right ) dx \leq C w(Q_{I}) 
\end{equation*}
for all Carleson cubes $Q_{I}$;
\item  For each $0 < \eps<1 $, there exists $\delta >0$ such that for any interval $I \subset \T$ and any countable union of disjoint intervals $ E \subset I $ with
$ |Q_E| / |Q_I| < \delta$, $ w (E) < \eps w(Q_I)$;
\item[$(3')$]  For each $0 < \eps<1 $, there exists $\delta >0$ such that for any interval $I \subset \T$ and any measurable subset $ \Omega  \subset Q_{I}$ with
$ |\Omega| / |Q_I| < \delta$, $ w (\Omega) < \eps w(Q_I)$;
\item $ w$ has the reverse H\"older property on Carleson cubes. That means, there exists $r>1$ and $C >0$ such that
$$
   \left( \frac{1}{|Q_{I}|}  \int_{Q_{I}}    w^r    \right)^{1/r}   \le C \frac{1}{|Q_{I}|}   \int_{Q_{I}}  w
$$   
for all Carleson cubes $Q_{I}$;
 
\item There exists $p>1$ such that $w\in B_{p}$;
\item There exists $C >0$ such that
$$
\left( \frac{1}{|Q_{I}|}\int_{Q_{I}}w(z)dA\right)\exp\left ( \frac{1}{|Q_{I}|}\int_{Q_{I}}\log(w^{-1})(z)dA(z)  \right ) \leq C.
$$

\end{enumerate}
Moreover, in (2) we can choose $C$ as $C' B_\infty(w)$, where $C'$ is an absolute numerical constant.
\end{theorem}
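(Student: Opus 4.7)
My plan is to follow the familiar cycle of equivalences from the classical Muckenhoupt $A_\infty$ theory, transposed from Euclidean cubes to Carleson boxes. The regularity condition \eqref{e.regcon1} is what makes this transposition possible: it implies that $w$ is comparable, up to the factor $C_\rho$, to a function constant on each top half $T_J$ with value $\overline{w}_J := \frac{1}{|T_J|}\int_{T_J}w\,dA$. Since each Carleson box decomposes as $Q_I = \bigsqcup_{J \in \mathcal{D}(I)}T_J$ over its dyadic sub-intervals, the averages $\overline{w}_{Q_J}$ form a martingale on the Carleson tree, and the Carleson-box maximal function $M(w\mathbf{1}_{Q_I})$ is pointwise comparable to the dyadic version $M^d(w\mathbf{1}_{Q_I})(x) = \sup\{\overline{w}_{Q_J} : x \in Q_J \subset Q_I\}$. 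Thus the whole problem reduces to dyadic martingale estimates on a tree, for which the Carleson-box analogues of the Calder\'on--Zygmund decomposition and of Gehring's lemma are available.

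\textbf{The main cycle.} I would prove (1) $\Rightarrow$ (2) $\Rightarrow$ $(3')$ $\Rightarrow$ (3) $\Rightarrow$ (1), together with (2) $\Leftrightarrow$ (6) and $(3')$ $\Rightarrow$ (4) $\Rightarrow$ (5) $\Rightarrow$ (1). For (1) $\Rightarrow$ (2), I would write $\log(e+w/\overline{w}_{Q_I}) \lesssim 1 + \log^+(w/\overline{w}_{Q_I})$ and use the layer-cake formula to reduce matters to controlling
\[
\int_1^\infty w\bigl(\{w > \lambda\overline{w}_{Q_I}\} \cap Q_I\bigr)\,\frac{d\lambda}{\lambda};
\]
a Calder\'on--Zygmund stopping on $M^d(w\mathbf{1}_{Q_I})$ at level $\lambda\overline{w}_{Q_I}$ expresses this level set as a disjoint union $\bigsqcup Q_{J_k}$ with $\overline{w}_{Q_{J_k}} \asymp \lambda\overline{w}_{Q_I}$, and summation yields the bound by $\int_{Q_I} M(w\mathbf{1}_{Q_I})\,dA \leq B_\infty(w) w(Q_I)$, which is precisely the quantitative form of the ``Moreover'' clause. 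For (2) $\Rightarrow$ $(3')$, split $\int_\Omega w$ at the threshold $\lambda\overline{w}_{Q_I}$: the tail is bounded by $C w(Q_I)/\log\lambda$ via (2), and the main part by $\lambda\,\overline{w}_{Q_I}\,|\Omega| \le \lambda\,\delta\,w(Q_I)$; optimizing in $\lambda$ produces $w(\Omega) < \eps w(Q_I)$ once $\delta$ is small. The implication $(3') \Rightarrow (3)$ is immediate. For (3) $\Rightarrow$ (1), I would integrate $\int M^d(w\mathbf{1}_{Q_I})$ in layer-cake form: for $t>\overline{w}_{Q_I}$ the set $\{M^d(w\mathbf{1}_{Q_I})>t\}\cap Q_I$ is a union of sub-Carleson boxes of total measure $\leq w(Q_I)/t$, and iterating (3) produces geometric decay of $w$ on these level sets, making the integral finite.

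\textbf{Remaining implications and principal obstacle.} The equivalence (2) $\Leftrightarrow$ (6) is Jensen's inequality, applied once to $\log w^{-1}$ and once to $w$. For $(3') \Rightarrow (4)$, a Gehring self-improvement based on the same stopping-time decomposition raises the reverse inequality's exponent from $1$ to some $r=r(B_\infty(w))>1$. From (4) to (5), combine reverse H\"older with H\"older's inequality applied to $(w^{1-p'})_{Q_I}$, choosing $p$ so that $r'(p'-1) \le 1$; this delivers a $B_p$ constant controlled by the reverse H\"older constant. Finally, (5) $\Rightarrow$ (1) follows because $B_p$ implies the $L^{p'}(\sigma)$-boundedness of the Carleson-box maximal function (with $\sigma=w^{1-p'}$), which by duality gives $\int_{Q_I} M(w\mathbf{1}_{Q_I})\,dA \lesssim w(Q_I)$. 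The principal obstacle I anticipate is the step (3) $\Rightarrow$ $(3')$, and this is where \eqref{e.regcon1} does real work. A Calder\'on--Zygmund decomposition of $\mathbf{1}_\Omega$ at a small density level $\eta \sim \eps/C_\rho^2$ produces maximal stopping boxes $\{Q_{J_k}\}$ whose union has measure $\lesssim \delta|Q_I|/\eta$, and (3) directly controls $w(\bigsqcup Q_{J_k})$. Outside these boxes, on every top half $T_J$ the uniform density bound $|\Omega \cap T_J| \leq |\Omega \cap Q_J| \le \eta|Q_J| \asymp \eta|T_J|$ holds, and \eqref{e.regcon1} converts this to $w(\Omega \cap T_J) \le C_\rho^2\,\eta\,w(T_J)$; summing over disjoint good top halves yields the remaining portion of $w(\Omega)$.
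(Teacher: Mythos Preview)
Your main cycle $(1)\Rightarrow(2)\Rightarrow(3')\Rightarrow(3)\Rightarrow(1)$ and the steps $(3')\Rightarrow(4)$, $(5)\Rightarrow(1)$ are fine and broadly parallel the paper's approach; your direct argument for $(3)\Rightarrow(3')$ via a Calder\'on--Zygmund decomposition of $\mathbf{1}_\Omega$ is a nice addition the paper does not make explicit.

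However, there is a genuine gap in the step $(4)\Rightarrow(5)$. Reverse H\"older gives you control of \emph{positive} powers $\int_{Q_I}w^r$ in terms of $\int_{Q_I}w$, while the $B_p$ condition requires control of the \emph{negative} power $\int_{Q_I}w^{1-p'}$. No direct application of H\"older's inequality bridges these, regardless of how you choose $p$ relative to $r$: the condition ``$r'(p'-1)\le 1$'' does not help, because you still need an upper bound on $\int w^{-(p'-1)}$, and reverse H\"older says nothing about where $w$ is small. This is precisely the step where the paper does the most work. The paper's route is $(3)\Rightarrow(5)$: first it shows (using \eqref{e.regcon1} in an essential way) that $(3)$ forces $w$ to be \emph{doubling} on Carleson boxes, then it runs a \emph{reverse} corona decomposition --- stopping when the average of $w$ drops by a large factor --- and uses doubling (via Lemma~\ref{l.doubcons}) together with \eqref{e.regcon1} to bound $\int_{Q_I}w^{-\varepsilon}$ for small $\varepsilon>0$. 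This gives $w\in B_{1+1/\varepsilon}$. Without doubling you cannot control how fast $w$ may decay on sub-boxes, and doubling is not automatic from reverse H\"older in the Carleson-box setting.

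A smaller gap: $(2)\Leftrightarrow(6)$ is not ``Jensen's inequality''. Jensen gives only the trivial direction $\exp\bigl(|Q_I|^{-1}\int\log w\bigr)\le \overline{w}_{Q_I}$; the reverse comparison in $(6)$ and its equivalence with the $L\log L$ estimate $(2)$ is a genuine theorem (of Hru\v{s}\v{c}ev type) which in this setting the paper handles by going through $(5)$: namely $(5)\Rightarrow(6)$ by letting $p\to\infty$ in the $B_p$ condition, and $(6)\Rightarrow(1)$ by citing Duoandikoetxea--Mart\'in-Reyes--Ombrosi for general Muckenhoupt bases. You should route $(6)$ through $(5)$ rather than through $(2)$.
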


The proof of this theorem presents several difficulties. One is the lack of control of the weight by the maximal function of the weight, due to the geometric arrangement of Carleson boxes. This is a major obstruction to obtain a reverse H\"older property. But even if we had this control, $B_{\infty}$ weights lack a strong doubling property, characteristic of $B_{p}$ weights. We overcome these obstacles by using weights that are nearly constant on $\rho$-top halves.

Previous to this paper is the work of A. Borichev \cite{MR2036327}. Although not properly working on the limiting $B_{\infty}$ case, he considers self-improvement of the B\'ekoll\'e-Bonami $B_p$ class to a $B_{p-\epsilon}$ class. 
He obtains such an improvement when working with weights that are exponentials of subharmonic functions. Subsequently also weights that are constant on top halves appear in his argument. This self-improving property is classically associated with the reverse H\"older property and it is well-known for Muckenhoupt weights. Another paper which is close to the topic of this paper is the work of Duoandikoetxea et. al. \cite{MR3473651}, where properties of the $A_{\infty}$ class associated to general bases, for instance Carleson boxes or rectangles, are studied. Many of the implications that we prove in our paper for weights that are constant on top-halves have counterexamples in the general case of their paper.

This paper is organized as follows. In Section \ref{s.preelims} we list some properties of the weights satisfying  \eqref{e.regcon1}, and then proceed to  state precisely some of the definitions and describe some of the preliminaries needed for the proof of the main theorem. Section \ref{s.mainproof} contains the proof of the main theorem. In Section \ref{s.apply} we present an interesting characterization of the $B_\infty$ class with corresponding applications in the study of the spectra of certain integral operators. The last section contains the bibliography.

\section{Preliminaries} \label{s.preelims}

We begin with some remarks about the class of weights considered in this paper.\\

\begin{proposition}\label{p.regcond} (a) A differentiable strictly positive weight $w$ on $\D$ satisfies \eqref{e.regcon1}  if  there exists $C_w>0$ such that
	$$(1-|z|^{2})|\nabla w(z)|\leq C_{w}w(z), \quad z\in \mathbb D.$$	
	(b) If $w$ satisfies \eqref{e.regcon1}, then there exists a differentiable weight $\tilde{w}$ which satisfies \eqref{e.regcon1} and 
	$$K_w^{-1}w(z)\le \tilde{w}(z)\le K_ww(z)$$
	for some fixed constant $k_w>0$ and all $z\in \D$.\\
	(c)  If $w$ satisfies \eqref{e.regcon1}, then there exist constants $a_w,b_w>0$ such that 
	$$a_w^{-1}(1-|z|^2)^{b_w}\le w(z)\le a_w(1-|z|^2)^{-b_w},\quad z\in \D.$$
\end{proposition}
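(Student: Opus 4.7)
The plan is to treat the three statements in order, using throughout the observation that each $\rho$-top $T_{I,\rho}$ is a Euclidean box of radial and angular size comparable to $|I|$, so it has hyperbolic diameter bounded by a constant depending only on $\rho$. For part (a), I fix $z,\xi\in T_{I,\rho}$, join them by a piecewise-linear curve $\gamma\subset T_{I,\rho}$, and integrate the logarithmic derivative along $\gamma$:
$$\left|\log\frac{w(z)}{w(\xi)}\right|\le \int_{\gamma}\frac{|\nabla w(\zeta)|}{w(\zeta)}\,|d\zeta|\le C_w\int_\gamma\frac{|d\zeta|}{1-|\zeta|^2}\le C(C_w,\rho),$$
the last inequality holding because the hyperbolic length of $\gamma$ is controlled by the hyperbolic diameter of $T_{I,\rho}$. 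Exponentiating yields \eqref{e.regcon1}.

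For part (b), I mollify $w$ at the natural scale $(1-|z|)$. Fixing a radial $\phi\in C_c^\infty(\mathbb{C})$ with $\int\phi\,dA=1$ and support in the unit disk, and choosing a small enough $r>0$, I set
$$\tilde w(z):=\int_\D w(\zeta)\,\phi\!\left(\frac{\zeta-z}{r(1-|z|)}\right)\frac{dA(\zeta)}{(r(1-|z|))^2}.$$
For $r$ small, the Euclidean disk $D(z,r(1-|z|))$ is contained in a slightly enlarged $\rho'$-top half containing $z$, so by \eqref{e.regcon1} (with parameter $\rho'$, which is permissible by the remark following that condition) the values of $w$ on the support are comparable to $w(z)$, yielding $K_w^{-1}w(z)\le\tilde w(z)\le K_w w(z)$. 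Smoothness of $\tilde w$ follows by differentiating under the integral sign. The same calculation gives $(1-|z|)|\nabla\tilde w(z)|\lesssim \tilde w(z)$, so part (a) then shows that $\tilde w$ itself satisfies \eqref{e.regcon1}.

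For part (c), I iterate \eqref{e.regcon1} along a dyadic chain of shrinking top halves. Given $z\in\D$ with $|z|$ close to $1$, I choose arcs $I_0\supset I_1\supset\cdots\supset I_n$ with $|I_{k+1}|=|I_k|/2$, $z\in T_{I_n,\rho}$, and $n\asymp\log_2\frac{1}{1-|z|}$, arranged so that consecutive top halves $T_{I_k,\rho}$ and $T_{I_{k+1},\rho}$ meet (enlarging $\rho$ if necessary so that they do). Applying \eqref{e.regcon1} at each step gives a multiplicative factor $C_\rho$, and the telescoping product yields
$$w(z)\le C_\rho^n\, w(z_0)\lesssim (1-|z|)^{-\log_2 C_\rho},$$
where $z_0$ is a fixed point with $|z_0|\le 1/2$; the lower bound is symmetric, so (c) follows with $b_w=\log_2 C_\rho$. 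Throughout, the only real work is geometric bookkeeping---choosing the curve in (a), covering the small disk in (b), and constructing the overlapping dyadic chain in (c)---but each of these is routine given the explicit rectangular form of $T_{I,\rho}$ in Definition \ref{d.tophalf}.
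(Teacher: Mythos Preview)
Your proof is correct. Parts (a) and (b) follow essentially the same route as the paper: for (a) the paper also integrates the logarithmic derivative (writing it as $|\log w(z)-\log w(\xi)|\le |z-\xi|\sup_{\zeta\in T_{I,\rho}}|\nabla w(\zeta)|/w(\zeta)$), and for (b) the paper uses precisely your mollification at scale $(1-|z|)$ and verifies the gradient bound for $\tilde w$ by differentiating under the integral.

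Part (c), however, is handled differently. The paper first reduces to the differentiable case via (b), and then for the smooth $\tilde w$ (which satisfies the gradient condition from (a)) integrates $|\nabla\log\tilde w|$ along the radial segment from $0$ to $z$, obtaining $|\log(\tilde w(z)/\tilde w(0))|\lesssim\int_0^{|z|}\frac{dt}{1-t}$, which is the desired logarithmic bound. You instead iterate \eqref{e.regcon1} directly along a chain of overlapping dyadic top halves from a fixed base point out to $z$, picking up a factor $C_\rho$ at each of the $\asymp\log_2\frac{1}{1-|z|}$ steps. Your approach is more elementary in that it uses only the multiplicative hypothesis \eqref{e.regcon1} and avoids the detour through (b) and any differentiation; the paper's approach is slightly shorter once (b) is in hand and makes the constant $b_w$ come out more transparently from the gradient bound $C_w$. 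Both yield the same polynomial growth/decay estimate.
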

\begin{proof} (a) follows immediately from the inequality
	$$|\log w(z)-\log w(\xi) |\le  |z-\xi|\sup_{\zeta\in T_{I,\rho}}\frac{|\nabla w(\zeta)|}{w(\zeta)}, \quad z,\xi\in T_{I,\rho}.$$
	(b) If $u$ is a smooth positive function supported on $\{|z|<\frac1{2}\}$, with $$\int_{\mathbb{C}} udA=1,$$
	the weight $$\tilde{w}(z)=(1-|z|)^{-2}\int_{\mathbb{C}}u\left(\frac{z-\xi}{1-|z|}\right)w(\xi)dA(\xi)$$
	is differentiable  and satisfies 
	$$\inf\left\{w(\xi):~\frac{|z-\xi|}{1-|z|}<\frac1{2}\right\}\le \tilde{w}(z)\le \sup\left\{w(\xi):~\frac{|z-\xi|}{1-|z|}<\frac1{2}\right\},$$
	which easily implies the inequalities in (b).\\	
	Moreover, a direct estimate gives
	\begin{align*}|\nabla\tilde{w}(z)|&\lesssim (1-|z|)^{-1}\tilde{w}(z)+(1-|z|)^{-3}\int_{\mathbb{C}}|\nabla u|\left(\frac{z-\xi}{1-|z|}\right)w(\xi)dA(\xi)\\&
	\lesssim (1-|z|)^{-1}\tilde{w}(z)+(1-|z|)^{-1}w(z)\int_{\mathbb{C}}|\nabla u|dA.\end{align*}
	Together with the inequalities in (b) it follows that $\tilde{w}$ satisfies (a), hence it satisfies \eqref{e.regcon1}.
	
	\noindent
	(c) For differentiable weights $w$ satisfying \eqref{e.regcon1} we can integrate on rays from the origin to obtain
	$$\left|\log\frac{w(z)}{w(0)}\right|\le\int_0^{|z|} |\nabla w|(tz) dt,$$
	and the assertion follows by a direct calculation. The general case follows by (b).
\end{proof}

We also need the following definitions: 
\begin{definition} \label{d.carlbox}
Let $I$ be an interval $I\subset \T$ and let $e^{i\theta}$ be the center of $I$. We define the Carleson box $Q_I$ associated to $I$ as
$$
Q_{I}:=\{z=re^{i\alpha}: \,\,1-|I|<r<1, |\theta-\alpha|<|I|/2\, \}.
$$
\end{definition}


Throughout the paper, given an interval $I\subset \T$, we will denote by $\mathcal D(I)$ the set of dyadic descendants of $I$. The first descendants of $I$ will be the two disjoint intervals of size $2^{-1}|I|$, each of which contains exactly one end point of $I$. The remaining descendants will be defined recursively.  Also given a set $E\subset \D$, and an integrable function $f$, we write $f(E):=\int_{E} fdA(x)$.
We will need the following basic lemmas:

\begin{lemma}\label{l.strongd}
Let $w\in B_{\infty}$ with constant $B_{\infty}(w)$. Then there exists $\rho>0$ such that for every $I$ interval in $\mathbb T$, the $\rho$-top of $Q_{I}$, $T_{I,\rho}$, satisfies $w(T_{I,\rho})\geq \frac{1}{B_{\infty}(w)} w(Q_{I})$.
\end{lemma}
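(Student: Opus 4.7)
The plan is to decompose $Q_I$ into dyadic radial shells matched with the top halves of its descendants, use the $B_\infty$ bound to control the total mass of the maximal function across these shells, and then pigeonhole the resulting series to extract a $\rho$-top of large weight. I will set $\rho_k := 1 - 2^{-k}$ for $k \geq 0$ and $\mu_k := w(Q_I \setminus T_{I,\rho_k})$, and begin by verifying the geometric identity
\[
T_{I,\rho_{k+1}} \setminus T_{I,\rho_k} = \bigsqcup_{J \in \mathcal D_k(I)} T_J,
\]
where $\mathcal D_k(I)$ denotes the $k$-th generation dyadic descendants of $I$ and each $T_J$ has area $\tfrac12|Q_J|$. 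This also gives $\mu_0 = w(Q_I)$, $(\mu_k)$ decreasing, and $\sum_{J \in \mathcal D_k(I)} w(Q_J) = \mu_k$.

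Next, I will produce a lower bound on $M(w 1_{Q_I})$ on each shell. For $z \in T_J$ with $J \in \mathcal D_k(I)$, the Carleson cube $Q_J$ contains $z$ and lies inside $Q_I$, so $M(w 1_{Q_I})(z) \geq w(Q_J)/|Q_J|$; integrating over $T_J$ and summing over $J \in \mathcal D_k(I)$ yields
\[
\int_{T_{I,\rho_{k+1}} \setminus T_{I,\rho_k}} M(w 1_{Q_I})\,dA \geq \tfrac12 \mu_k.
\]
Summing in $k \geq 0$ and invoking the defining bound for $B_\infty$, I then obtain
\[
\tfrac12 \sum_{k \geq 0} \mu_k \leq \int_{Q_I} M(w 1_{Q_I})\,dA \leq B_\infty(w)\, w(Q_I).
\]

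To finish, I use that $(\mu_k)$ is decreasing: $(N+1)\mu_N \leq \sum_{k=0}^N \mu_k \leq 2 B_\infty(w)\, w(Q_I)$, so $\mu_N \leq \frac{2 B_\infty(w)}{N+1}\, w(Q_I)$. Choosing $N$ to be the smallest integer with $N+1 \geq 2 B_\infty(w)^2/(B_\infty(w)-1)$, which depends only on $B_\infty(w)$ and not on $I$, forces $\mu_N \leq (1 - 1/B_\infty(w))\, w(Q_I)$, which rearranges exactly to $w(T_{I,\rho_N}) \geq \frac{1}{B_\infty(w)} w(Q_I)$; taking $\rho := 1 - 2^{-N}$ then works uniformly in $I$. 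I do not expect a serious obstacle: the argument is a dyadic pigeonhole and the regularity hypothesis \eqref{e.regcon1} plays no role, so the lemma is in fact valid for every $B_\infty$ weight. The only delicate step is the geometric identification of the annuli $T_{I,\rho_{k+1}} \setminus T_{I,\rho_k}$ with disjoint unions of top halves of descendants, together with checking that each such $Q_J$ sits inside $Q_I$ so that $M$ may be estimated from below by $w(Q_J)/|Q_J|$.
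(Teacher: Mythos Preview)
Your argument is correct and follows essentially the same route as the paper's proof: both decompose $Q_I$ into the dyadic ``shells'' $G_k=\bigsqcup_{J\in\mathcal D_k(I)}T_J$, use the lower bound $M(w1_{Q_I})\ge w(Q_J)/|Q_J|$ on each $T_J$ to deduce an inequality of the form $\sum_k (k+1)\,w(G_k)\lesssim B_\infty(w)\,w(Q_I)$ (your $\sum_k\mu_k$ is exactly this sum after Abel summation), and then extract a good level~$N$---you via the monotonicity pigeonhole $(N+1)\mu_N\le\sum_{k\le N}\mu_k$, the paper via an algebraic splitting at $i_0=\lceil B_\infty(w)\rceil$. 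One small correction: the ratio $|T_J|/|Q_J|$ is not exactly $\tfrac12$ (it equals $\frac{1-3|J|/4}{2-|J|}\in[\tfrac14,\tfrac12]$ with the area measure on $\D$), so your constant $\tfrac12$ should be replaced by a universal $c>0$, which only affects the choice of $N$ and not the structure of the argument.
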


\begin{proof}
We introduce some notation for the proof. For fixed $I$, let $G_{0}:=T_{I}$, and more generally $\displaystyle G_{i}:=\cup_{\substack{J\in \mathcal D(I) \text{ dyadic } \\ |J|/|I|=2^{-i}}}T_{J}$. Notice that $Q_{I}=\cup_{i\geq 0} G_{i}$. The definition of the class $B_{\infty}$ implies 
\begin{equation} \label{e.sumb}
\sum_{i\geq 0}iw(G_{i})\leq B_{\infty}(w)\sum_{i\geq 0}w(G_{i}).
\end{equation}
Let us fix $i_{0}$ minimal such that $i_{0}\geq B_{\infty}(w)$. Using \eqref{e.sumb} we have
$$
\sum_{i\geq 0}w(G_{i})\leq \sum_{i=0}^{i_{0}}iw(G_{i})+\sum_{i>i_{0}}(i-B_{\infty}(w))w(G_{i})\leq B_{\infty}(w)\sum_{i\geq i_0}w(G_{i}).
$$
Now there exists $0<\rho<1$ depending on $B_{\infty}(w)$ but not on $I$ such that $\cup_{i=0}^{i_0}G_i\subset T_{I,\rho}$ and hence
$ w(T_{I,\rho})\geq \sum_{i\geq i_0}w(G_{i})\geq \frac{1}{B_{\infty}(w)}w(Q_{I})$ as desired.
\end{proof}

Given $I\subset \T$, a weight $w$ and $R>4$, we use a corona decomposition to define a collection of cubes $\mathcal L$ (that depends on the choice of the initial $I$, $w$ and $R$) as follows:

\begin{enumerate}
\item Firstly, we define the stopping children of a given interval $I$, $\mathcal L(I)$:
$$
\mathcal L(I):= \left \{\text{ maximal intervals } L\in \mathcal D(I) \text{ such that } \frac{w(Q_{L})}{|Q_{L}|}>R \frac{w(Q_{I})}{|Q_{I}|} \right \}
$$
\item Iterating this stopping procedure we construct collections of intervals $\mathcal L_{1}:= \mathcal L(I)$ and in general for $j\geq 2$, $\mathcal L_{j}:=\cup_{L\in \mathcal L_{j-1}} \mathcal L(L)$. We denote $\mathcal L:= \cup_{j\geq 1} \mathcal L_{j}$.
\end{enumerate}

A couple of remarks are in order. First, from the stopping procedure we obtain that given $L\in \mathcal L_{j}$ ,
\begin{equation}\label{e.stopsparse}
        \sum_{\substack{L' \in \LL_{j+1}\\ L'\subset L}}    |Q_{L'}| \le R^{-1} |Q_{L}|.
\end{equation}
Second, by maximality one obtains
$$ 
      R^j \frac{w(Q_{I})}{|Q_{I}|} < \frac{w(Q_{L})}{|Q_{L}|} \le (4R)^{j}  \frac{w(Q_{I})}{|Q_{I}|} 
$$
for all $L \in \LL_j$, $j  \in \nat$.
Finally, let us consider the dyadic maximal function on Carleson cubes associated to $\mathcal D (I)$, $M_{d}$, where for $x\in\D$
$$
M_{d}(f)(x):= \sup_{\substack{x\in Q_{J}\\ J\in \mathcal D (I)}}\frac{1}{|Q_{J}|}\int_{Q_{J}} f(y)dA(y).
$$
Then
\begin{equation}\label{e.mlinear}
    M_{d}(1_{Q_{I}} w)  \lesssim  \left (1 + \sum_j \sum_{L\in \LL_j}(4R)^j 1_{Q_{L}}\right )  \frac{w(Q_{I})}{|Q_{I}|}.
\end{equation}

The following observation is crucial in the proof of the main theorem.

\begin{remark}
Let $z\in Q_{I}$ for some $I\subset \T$, then there exists $J\in \mathcal D(I)$ such that $z\in T_{J}$, where we use the notation from Def. \ref{d.tophalf}. Using \eqref{e.regcon1}, we have
$$
w(z)\lesssim \frac{w(T_{J})}{|T_{J}|}\lesssim \frac{w(Q_{J})}{|Q_{J}|},
$$
and we conclude that
\begin{equation}\label{e.maxcontrol}
w(z)\lesssim  M_{d}(w)  (z),
\end{equation}
with constants only depending on the constant $C_{1/2}$ in \eqref{e.regcon1}.
\end{remark}

\section{Proof of Theorem \ref{t.main}} \label{s.mainproof}

\begin{proof}
We first notice that the equivalence of $(1)$ and $(2)$ is not true in general, see Counterexample 4 in \cite{MR3473651}. One of the directions always holds, namely 
$(2) \Rightarrow (1)$. Since Carleson cubes form what is known as a Muckenhoupt basis, the proof of this implication can be found in \cite{MR3473651}. We prove the opposite implication, where the use of weights satisfying \eqref{e.regcon1} is crucial. 

Let $Q_{I}$ be a Carleson cube. Without loss of generality, we can assume that $\frac{w(Q_{I})}{|Q_{I}|}=1$. We consider the dyadic grid associated to $I$, $\mathcal D(I)$, the maximal function $M_{d}$, and the corona decomposition $\mathcal L$ of $\mathcal D(I)$ with $R>4$ as described in section 2. We write $\mathcal{L}_0$ for the collection $\{Q_I\}$.  If $L\in \mathcal L_{j}$ and $x\in Q_L\setminus \cup_{\substack{L'\in \mathcal L_{j-1}\\ L'\subset L} }Q_{L'}$, then  
$$
M_{d}w(x)\approx \frac{w(Q_{L})}{|Q_{L}|} \approx 2^j \frac{w(Q_{I})}{|Q_{I}|}=2^j
$$ and 
$$\log \left (e + M_{d}w(x)\right )\approx j+1.
$$
Using (1) and the estimates above, we conclude
\begin{align*}
 B_\infty(w) w(Q_{I})& \geq \int_{Q_{I}}M(w1_{Q_{I}})(x)dA(x) \geq \int_{Q_{I}}M_{d}(w1_{Q_{I}})(x) dA(x)\\
 & \gtrsim  \int_{Q_{I}}\sum_{j \ge 0}\sum_{L\in \mathcal L_{j}}\frac{w(Q_{L})}{|Q_{L}|}1_{Q_{L}}dA(x)\\
& = \int_{Q_{I}} w(x) \sum_{j \ge 0}\sum_{L\in \mathcal L_{j}}1_{Q_{L}}dA(x)\\
& = \int_{Q_{I}} w(x) \left (\sum_{j\geq 1} j \sum_{L\in \mathcal L_{j}} 1_{Q_{L}\setminus \cup_{\substack{L'\in \mathcal L_{j+1}\\ L'\subset L} }Q_{L'}} +1\right )dA(x)\\
& \approx \int_{Q_{I}} w(x) \log \left (e + M_{d}w(x)\right )dA(x)\\
& \gtrsim \int_{Q_{I}} w(x) \log \left (e + w(x)\right )dA(x),
\end{align*}
where  we have used that $w$ satisfies \eqref{e.regcon1} and \eqref{e.maxcontrol} in the last inequality.
The implications $(2) \Rightarrow  (3')$ and $(3') \Rightarrow  (1)$ correspond to Theorem 4.1 and 6.1 in \cite{MR3473651} and we will not include them here.

To prove the equivalence of $(4)$, $(3')$ and $(3)$, first note that clearly
$(4) \Rightarrow (3') \Rightarrow (3)$ by H\"older's inequality. The proof of the reverse implication $(3) \Rightarrow (4)$ runs along the lines of  
Theorem 3.3 in Wilson \cite{MR2359017}, page 46. Let us fix a Carleson cube $Q_{I}$. Choose $\delta>0$ from (3) for $\epsilon=1/5^{r}$. Now consider the corona decomposition $\mathcal L$ with $R=\frac{1}{\delta}$ as defined in Section 2. For this choice of $R$, \eqref{e.stopsparse} gives

$$
\sum_{\substack{L' \in \LL_{j+1}\\ L'\subset L}}    |Q_{L'}| \le \delta |Q_{L}|,
$$
and thus by the definition of $\delta$ together with (3)
 $$
  \sum_{L' \in \LL_{j+1}} w(Q_{L'})  \le     \frac{1}{5^{r}} \sum_{L \in  \LL_{j} } w(Q_{L}) \le \frac{1}{5^{r(j+1)}} w(Q_{I}),
 $$
 where the second inequality is obtained by iterating the argument. Using (\ref{e.mlinear}),
we thus estimate 
\begin{eqnarray*}
 \frac{1}{|Q_{I}|}    \int_{Q_{I}} w^r 
 &\lesssim&  \frac{1}{|Q_{I}|}    \int_{Q_{I}} (M_{d}  (1_{Q_{I}} w))^r \lesssim      \frac{ w(Q_{I})^r}{|Q_{I}|^{1+ r}}    \int_{Q_{I}}   (1 +  \sum_{k\ge 1}  \sum_{L \in \LL_k(Q_{I})}  (4R)^k 1_{Q_{L}})^r    \\
    & \approx &
       \frac{ w(Q_{I})^r}{|Q_{I}|^{1+ r}}       \int_{Q_{I}}  \left( 1 +  \sum_{k\ge 1}  \sum_{L \in \LL_k(Q_{I})} (4R)^{rk} 1_{Q_{L}}   \right)  \\
      & =& \left(\frac{w(Q_{I})}{|Q_{I}|}\right)^r   +   \frac{w(Q_{I})^r}{|Q_{I}|^{1+r}}\  \sum_{k \ge 1}   (4R)^{rk}  \sum_{L \in \LL_k(Q_{I})}  |Q_L|  \\
      & \le &\left(\frac{w(Q_{I})}{|Q_{I}|}\right)^r   +   \frac{w(Q_{I})^{r-1}}{|Q_{I}|^{r}}\  \sum_{k\ge 1}   4^{rk} R^{(r-1)k} \sum_{L \in \LL_k(Q_{I})}  w(Q_L)    \\
       &\le& \left(\frac{w(Q_{I})}{|Q_{I}|}\right)^r   +   \frac{w(Q_{I})^r}{|Q_{I}|^{r}}\  \sum_{k \ge 1} \frac{1}{5^{rk}}  4^{rk} R^{(r-1)k}   \\
       & \le & \left(\frac{w(Q_{I})}{|Q_{I}|}\right)^r  (1 +  \sum_{k \ge 1}  \frac{1}{5^{rk}}  (4^r R^{(r-1)})^k   \le C_r^r  \left(\frac{w(Q_{I})}{|Q_{I}|}\right)^r,
\end{eqnarray*}
provided $r$ is chosen such that
$  \frac{4^r R^{1-r}}{5^r}  <1 $.

We now prove $(3) \Rightarrow (5)$. First, we notice that (3) implies a doubling condition on the weight. The proof is similar to the classical proof in the case of Muckenhoupt weights,
but we have to take some care to adapt it to our setting. 

\begin{definition} We say that the weight $w$ is doubling, if there exists a constant $C>0$ such that
$$
     w(Q_{2I}) \le C w(Q_I).
$$
In particular, this implies that $w(Q_{\tilde I}) \le C^2 w(Q_I)$, where $ \tilde I$ is the dyadic parent of $I \in \mathcal{D}$.
\end{definition}

Choose $\delta<1$ corresponding to $\epsilon=1/2$ in (3) and choose $\rho$ such that $ 1 > \rho \ge   \max\{1- \delta, 3/4\}$, and such that $1/4$ is an integer multiple of $1 - \rho$. Let $T_{2I, \rho}$ be as in Definition \ref{d.tophalf},  and let $C_\rho$ be the constant from (\ref{e.regcon1}).
Then 
$$
    w(T_{2I, \rho}) \le C_\rho^2  w(T_{2I, \rho} \cap Q_I ) \frac{|T_{2I, \rho}|}{|T_{2I, \rho}\cap Q_I |} \le 6 \, C_\rho^2 w(Q_I).
$$
Now let us consider the remainder $Q_{2I} \ohne (Q_I \cup T_{2I, \rho} )$. For any union $E$ of countably many disjoint intervals of length less than $2(1 - \rho)|I|$ contained in $2I \ohne I$, we have
$$
    |Q_E| <  2 |I| (1 -\rho)  | I|   < \delta |Q_{2I}|
$$
and therefore by (3)
$$
    w(Q_E) < \frac{1}{2} w(Q_{2I})
$$
 Taking the supremum of such unions $E$ and using the fact that    $1/2 |I|$ is an integer multiple of $2(1 - \rho)|I|$, we obtain
 $$
       w(Q_{2I} \ohne (Q_I \cup T_{2I, \rho} )) \le \frac{1}{2} w(Q_{2I})
 $$
 and therefore
 $$
    w(Q_{2I}) \le 2 w(Q_I \cup T_{2I, \rho}) \le (6  C_\rho^2 +1) w(Q_I).
 $$
 Hence $w$ is doubling.

  In particular, there exists a constant $C >0$ such that 
  \begin{equation}   \label{dyad.doub}
  w(Q_{I})\leq C^{N} w(Q_{I^{(N)}}) \text{ for any  dyadic  $N$-th descendant $I^{(N)}$ of $I$}.
  \end{equation}
We use this fact to state the following lemma.

\begin{lemma} \label{l.doubcons}
Let $w$ be a doubling weight. Then for any $1>\eta>0$, there exist $1>\tau>0$ such that for any interval $I \subseteq \mathbb T$ and any set $E$ that is a countable union of disjoint intervals in $\mathcal D(I)$ such that $w(Q_{E})<\tau w(Q_I)$, one has $|Q_E|<\eta |Q_{I}|$.
\end{lemma}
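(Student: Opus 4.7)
The plan is to establish the contrapositive of the claim: if $|Q_E| \ge \eta |Q_I|$, then $w(Q_E) \ge \tau w(Q_I)$ for a suitable $\tau = \tau(\eta) > 0$ depending only on $\eta$ and the doubling constant $C$ appearing in \eqref{dyad.doub}.

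First I would introduce a generation cutoff $k \in \mathbb N$, to be chosen shortly in terms of $\eta$, and split $E = E_1 \cup E_2$, where $E_1$ collects those $J \in E$ with $|J| \ge 2^{-k}|I|$ and $E_2$ the remainder. The tail $E_2$ is easy to control on the Lebesgue side: each $J \in E_2$ is contained in a unique dyadic descendant of $I$ of generation $k$, and there are only $2^k$ such descendants, each with Carleson cube of area comparable to $4^{-k}|Q_I|$. A direct area computation therefore gives $|Q_{E_2}| \lesssim 2^{-k}|Q_I|$ with an absolute implied constant. Choosing $k$ large enough that this upper bound is at most $(\eta/2)|Q_I|$ forces
\begin{equation*}
|Q_{E_1}| \;\ge\; |Q_E| - |Q_{E_2}| \;\ge\; \tfrac{\eta}{2}\,|Q_I| \;>\; 0,
\end{equation*}
so in particular $E_1$ is non-empty.

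The second step is to exploit doubling. Pick any $J_0 \in E_1$; then $J_0$ is a dyadic descendant of $I$ of generation at most $k$, and the iterated doubling estimate \eqref{dyad.doub} yields $w(Q_{J_0}) \ge C^{-k} w(Q_I)$. Since $Q_{J_0} \subset Q_E$, we obtain $w(Q_E) \ge C^{-k} w(Q_I)$, so setting for instance $\tau := C^{-k}/2$ (with $k$ depending only on $\eta$ and on the doubling constant) completes the argument.

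The main obstacle, conceptually, is the interplay between the two scales. At fine generations Carleson cubes can accumulate freely in area while carrying only very small doubling-allowed weight, whereas at coarse generations the doubling lower bound is strong but few cubes are available. The observation that makes the argument go through cleanly is that the total area of $E_2$ is automatically controlled by the number of generation-$k$ ancestors, independent of how the fine intervals are distributed inside them; this decouples the cutoff $k$ from the specific configuration of $E$ and from $w$, so $k$, and hence $\tau$, can be chosen purely in terms of $\eta$ and the doubling constant.
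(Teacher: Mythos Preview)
Your argument is correct and is essentially the contrapositive of the paper's proof: the paper fixes $N$ with $2^{-2(N+1)}<\eta$, takes $\tau<C^{-N}$, and observes directly that $w(Q_E)<\tau w(Q_I)$ forces every interval in $E$ to have generation $>N$, whence $|Q_E|$ is small. Your splitting $E=E_1\cup E_2$ and the area bound $|Q_{E_2}|\lesssim 2^{-k}|Q_I|$ encode precisely the same observation, so the two proofs coincide in substance.
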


\begin{proof}
Let $C$ be the dyadic doubling constant of $w$ as in (\ref{dyad.doub}). Let us fix $0<\eta<1$, then there exists a natural number $N$ such that $2^{-2(N+1)}< \eta \leq 2^{-2N}$. Consider $\tau<\frac{1}{C^{N}}$, let $I$ be an interval and $E$ a countable union of intervals contained in $I$ with $w(Q_{E})<\tau w(Q_{I})$. By the doubling property, $Q_{E}$ cannot contain the Carleson box associated to any $N$-th descendant of $I$. Thus
$|Q_{E}|\leq 2^{-2(N+1)}|Q_{I}| < \eta|Q_{I}|$, concluding our proof.
\end{proof}

Using (\ref{e.regcon1}) and  Lemma \ref{l.strongd}, we have that if $z\in T_{\rho,I}$ for some interval $I$,
\begin{equation} \label{e.maxcon}
w(z)\approx \frac{w(T_{\rho, I})}{|T_{\rho, I}|}\gtrsim \frac{w(Q_I)}{|Q_I|},
\end{equation}
with constants only depending on $C_\rho$ and $B_\infty(w)$.

We will also need the following stopping decomposition. Let $A$ be a constant so that $A>\frac{1}{\tau}$, where $\tau$ is as in the Lemma \ref{l.doubcons}. Given a Carleson box $Q_{0}$, we define
$$
\mathcal L(Q_{0}):=\left \{ \text{ maximal dyadic Carleson boxes } Q_{I}\subset Q_{0}:\,\, \frac{w(Q_I)}{|Q_{I}|}<A^{-1}\frac{w(Q_{0})}{|Q_{0}|} \right \}.
$$

Then 
$$
\sum_{Q_{I}\in \mathcal L(Q_{0})}w(Q_{I})\leq A^{-1}w(Q_{0})<\tau w(Q_{0}),
$$
and by Lemma \ref{l.doubcons},
$$
\sum_{Q_{I}\in \mathcal L(Q_{0})}|Q_{I}|\leq \eta |Q_{0}|.
$$

We define $\mathcal L_{1}:=\mathcal L(Q_{I_{0}})$ and more generally, $\mathcal L_{k}:=\cup_{Q_{I}\in\mathcal L_{k-1}} \mathcal L(Q_{I})$ for $k >1$. We also define $\mathcal L=\cup_{k\geq 1} \mathcal L_{k}$. Given $Q_L\in \mathcal L$, we define $\mathcal D(Q_L)$ as the set of dyadic Carlesson boxes that have $Q_L$ as their stopping father. We have the following properties:

For $k\geq 1$, $Q_I\in \mathcal L_{k}$ and $Q_J\in \mathcal D(Q_I)$,
\begin{equation}\label{e.nonstop}
\frac{w(Q_J)}{|Q_{J}|}<A^{-1}\frac{w(Q_{I})}{|Q_{I}|},
\end{equation}
and 
\begin{equation*}
\frac{w(Q_J)}{|Q_{J}|}>\frac{C}{4}A^{-1}\frac{w(Q_{I})}{|Q_{I}|},
\end{equation*}
where $C$ is the dyadic doubling constant.

Hence by iteration
\begin{equation} \label{e.cabove} \frac{w(Q_I)}{|Q_{I}|}<A^{-k}\frac{w(Q_{0})}{|Q_{0}|},
\end{equation} 
\begin{equation} \label{e.cbelow}
\frac{w(Q_I)}{|Q_{I}|}> \left(\frac{C}{4}\right)^{-k} A^{-k}\frac{w(Q_{0})}{|Q_{0}|},
\end{equation}
and
\begin{equation} \label{e.ceta}
\sum_{Q_{I}\in\mathcal L_{k}}|Q_{I}|\leq \eta^{k}|Q_{0}|.
\end{equation}

We have now all the ingredients to complete the proof:
\begin{eqnarray*}
\frac{1}{|Q_{I_{0}}|}\int_{Q_{I_{0}}}w(z)^{-\varepsilon}dA(z) & = &
\frac{1}{|Q_{I_{0}}|}\sum_{Q_L\in \mathcal L}\sum_{Q_ I\in \mathcal D(Q_L)}\int_{T_{I}}w(z)^{-\varepsilon}dA(z)\\
 &\leq & \frac{1}{|Q_{I_{0}}|}\sum_{Q_L\in \mathcal L}\sum_{Q_ I\in \mathcal D(Q_L)}\int_{T_{\rho, I}}w(z)^{-\varepsilon}dA(z)\\
& \approx & \frac{1}{|Q_{I_{0}}|}\sum_{Q_L\in \mathcal L}\sum_{Q_I\in \mathcal D(Q_L)}\left ( \frac{w(T_{\rho, I})}{|T_{\rho, I}|} \right)^{-\varepsilon}|T_I|\\
&\leq & C(A,\epsilon,B_{\infty}(w)) \frac{1}{|Q_{I_{0}}|}\sum_{Q_L\in \mathcal L}\left ( \frac{w(Q_{L})}{|Q_{L}|} \right)^{-\varepsilon}|Q_L|\\
&\leq & C(A,\epsilon,B_{\infty}(w)) \frac{1}{|Q_{I_{0}}|} \left ( \frac{w(Q_{I_{0}})}{|Q_{I_{0}}|} \right)^{-\varepsilon}\sum_{k}\left(\frac{4}{C}\right)^{\varepsilon k}A^{\varepsilon k}\sum_{L\in \mathcal L_{k}} |Q_L|\\
&\leq & C(A,\epsilon,B_{\infty}(w)) \left ( \frac{w(Q_{I_{0}})}{|Q_{I_{0}}|} \right)^{-\varepsilon} \sum_{k} \left(\frac{4}{C}\right)^{\varepsilon k}A^{\varepsilon k}\eta ^{k}\\
&\leq & \tilde{C}(A,\epsilon,B_{\infty}(w)) \left ( \frac{w(Q_{I_{0}})}{|Q_{I_{0}}|} \right)^{-\varepsilon},
\end{eqnarray*}
where the geometric series in the penultimate line converges when choosing $\varepsilon$ sufficiently small. This concludes the proof of  $(2) \Rightarrow (5)$.

For the proof of  $ (5) \Rightarrow (1)$, let $w \in B_p$ and recall that $B_p(w) = B_{p'}(w')$, where $w' = w^{1-p'}$. Hence
\begin{eqnarray*}
\int_{Q_I} M(1_{Q_I}w) & \le &     \left( \int_{Q_I} M(1_{Q_I}w)^{p'} w'   \right)^{1/p'}                 \left( \int_{Q_I}     w     \right)^\frac{1}{p}   \\
&\le &     \| M(w \cdot) \|_{L^{p'}(w) \to L^{p'}(w')}  w(Q_I)^{1/p'} w(Q_I)^{1/p} \\
& =   & \|  M \|_{L^{p'}(w') \to L^{p'}(w')}  w(Q_I) \le B_p(w)^{\frac{1}{p'-1}} w(Q_I),
\end{eqnarray*}
where we have used the estimate (4.7) from \cite{MR3110501} for the maximal function in the last line.

The implication $(5) \Rightarrow (6)$ is a consequence of the fact that if  $w\in B_{p}$, then also $w\in B_{q}$ for any $q>p$, and the limit of the $B_{p}(w)$ as $p\to \infty$ is precisely the expression in (6). Finally, the proof of $(6) \Rightarrow (1)$ can be found in \cite{MR3473651}, as Carleson boxes form a Muckenhoupt bases, and the maximal function associated to it satisfies $L^{p}$ bounds. 

\end{proof}

\section{Further characterizations and applications} \label{s.apply}

We relate $B_\infty$ to the more general classes $B_p(\eta)$  defined as follows.  A measurable positive function $w$,  
belongs to the class $B_p(\eta)$ for $1<p<\infty$, $\eta>-1$,  if and only if

\begin{equation}
\label{eta.bekolle}
B_{p}(w,\eta):=\sup_{\substack{I \text{ interval}\\ I\subset \T}}\left(\frac{1}{A_\eta(Q_I)}\int_{Q_{I}}wdA_\eta\right) \left(\frac{1}{A_\eta(Q_{I})}\int_{Q_{I}}w^{1-p'}dA_\eta\right)^{p-1}<\infty,
\end{equation} 
where $dA_\eta=(1-|z|^2)^{\eta}dA$.
It is a result of  Bekoll\'e \cite{MR667319} that  
$$
\frac{w(z)}{(1-|z|^2)^{\eta}}\in B_p(\eta) \text{ if and only if } P_{\eta}: L^{p}(w)\mapsto L^{p}_{a}(w),
$$
where $P_{\eta}$ is defined as
$$
P_{\eta}f(z)= \int_{\mathbb D}\frac{f(\xi)}{(1-\bar{\xi}z)^{\eta+2}}(1+\eta)(1-|\xi|^{2})^\eta dA(\xi).
$$
The result in \cite{MR667319} is actually stronger. If  $\frac{w}{(1-|z|^2)^\eta}\in B_p(\eta)$, then also the  maximal version  of $P_\eta$,
\begin{equation}\label{p+}
P_{\eta}^+f(z)= \int_{\mathbb D}\frac{f(\xi)}{|1-\bar{\xi}z|^{\eta+2}}(1+\eta)(1-|\xi|^{2})^\eta dA(\xi),
\end{equation}
defines a bounded operator from $L^p(w)$ into itself.

Clearly, if  $\frac{w}{(1-|z|^2)^\eta}\in B_p(\eta),~p>1,\eta>-1$
then $\frac{w}{(1-|z|^2)^\delta}\in B_q(\delta),$ whenever $q\ge p,~\delta\ge \eta$. In the opposite direction, we have the following result.
\begin{lemma} \label{hoelder} Let $1<p<\infty$ and $\eta>\delta>-1$. If $\frac{w}{(1-|z|^2)^\eta}\in B_p(\eta)$ then there exists $q\in (1,\infty)$ such that $\frac{w}{(1-|z|^2)^\delta}\in B_q(\delta)$.
\end{lemma}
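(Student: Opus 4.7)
\textbf{Proof plan for Lemma \ref{hoelder}.}  The plan is to verify the $B_q(\delta)$ condition directly by a Hölder-type trick, for $q$ sufficiently large.  Write $\beta=\eta-\delta>0$, $v=w/(1-|z|^2)^\eta$, $u=w/(1-|z|^2)^\delta=v\,(1-|z|^2)^\beta$.  The key observation is that
$$u\,dA_\delta = v\,dA_\eta, \qquad u^{1-q'}dA_\delta = v^{1-q'}(1-|z|^2)^{\beta(1-q')+\delta}\,dA.$$
In particular the ``numerator'' average for the $B_q(\delta)$ constant equals
$\frac{A_\eta(Q_I)}{A_\delta(Q_I)}\,\mathrm{avg}_\eta(v) \approx |I|^\beta\,\mathrm{avg}_\eta(v)$,
so the only work is to control the ``denominator'' average.

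The central step is to apply Hölder's inequality with exponents $r=\frac{p'-1}{q'-1}$ and $r'=\frac{p'-1}{p'-q'}$, splitting the weight $(1-|z|^2)^{\beta(1-q')+\delta}$ into one factor of $(1-|z|^2)^{\eta/r}$ (absorbed with $v^{1-q'}$ to form $v^{1-p'}\,dA_\eta$) and the complementary factor.  The choice of $r$ makes $v^{(1-q')r}=v^{1-p'}$, which requires $r>1$, i.e.\ $q>p$.  A direct bookkeeping computation then gives
$$\int_{Q_I} v^{1-q'}(1-|z|^2)^{\beta(1-q')+\delta}\,dA \;\lesssim\; |I|^{2+\eta-\beta q'}\;\mathrm{avg}_\eta(v^{1-p'})^{1/r},$$
provided the exponent $\eta-\beta q' r'$ of $(1-|z|^2)$ arising in the second Hölder factor is strictly greater than $-1$ (so that $\int_{Q_I}(1-|z|^2)^{\eta-\beta q' r'}dA \approx |I|^{2+\eta-\beta q' r'}$).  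Since $\beta q' r'\to\beta=\eta-\delta<\eta+1$ as $q\to\infty$, this integrability condition is satisfied for $q$ large enough; this is the one and only restriction on $q$.

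Putting everything together,
$$\mathrm{avg}_\delta(u)\cdot\mathrm{avg}_\delta(u^{1-q'})^{q-1} \;\lesssim\; |I|^{\beta[1+(1-q')(q-1)]}\,\mathrm{avg}_\eta(v)\,\mathrm{avg}_\eta(v^{1-p'})^{(q-1)/r}.$$
Because $(q-1)(q'-1)=1$ the power of $|I|$ collapses to $0$, and because $(p'-1)(q-1)/r=(q-1)(q'-1)=1$ the $B_p(\eta)$ hypothesis
$\mathrm{avg}_\eta(v^{1-p'})\le B_p(\eta,v)^{p'-1}\mathrm{avg}_\eta(v)^{1-p'}$
causes the remaining factor $\mathrm{avg}_\eta(v)\cdot\mathrm{avg}_\eta(v^{1-p'})^{(q-1)/r}$ to simplify to a constant multiple of $B_p(\eta,v)$, independent of $I$.

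\textbf{Main obstacle.}  There is no deep obstruction: the whole argument is an algebraic bookkeeping that hinges on the single duality identity $(q-1)(q'-1)=1$, together with the fact that the exponent $\beta q' r'=\beta q'(p'-1)/(p'-q')$ can be driven below $\eta+1$ by choosing $q$ sufficiently large.  No reverse Hölder theory or regularity of $w$ on top halves is needed; unlike Theorem \ref{t.main}, this self-improvement proceeds purely via Hölder because we are allowed to \emph{increase} the exponent ($q>p$) and \emph{decrease} the weight parameter ($\delta<\eta$), which provides exactly enough room to absorb the extra power of $(1-|z|^2)$.
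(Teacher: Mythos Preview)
Your proof is correct and follows essentially the same route as the paper: both apply H\"older's inequality to the dual integral with exponents $r=(p'-1)/(q'-1)$ and $r'=(p'-1)/(p'-q')$, check that the residual power of $(1-|z|^2)$ is integrable once $q$ is large enough, and then verify that the powers of $|I|$ cancel via $(q-1)(q'-1)=1$. Your write-up simply makes explicit the bookkeeping that the paper summarizes as ``a direct calculation.''
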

\begin{proof}
By H\"older's inequality we have for $q\in (p,\infty)$
$$\int_{Q_I}w^{1-q'}(1-|z|^2)^{\delta q'}dA\le \left(\int_{Q_I}w^{1-p'}(1-|z|^2)^{\eta p'}dA
\right)^{\frac{p-1}{q-1}}\left(\int_{Q_I}(1-|z|^2)^{-\eta p'\frac{q'-1}{p'-q'}+\delta q'\frac{p'-1}{p'-q'}}dA
\right)^{\frac{p'-q'}{p'-1}}.$$
If $q$ is sufficiently large, then 
$$\left(\int_{Q_I}(1-|z|^2)^{-\eta p'\frac{q'-1}{p'-q'}+\delta q'\frac{p'-1}{p'-q'}}dA
\right)^{\frac{p'-q'}{p'-1}}\lesssim |I|^{-\eta p'\frac{q'-1}{p'-1}+\delta q'+2\frac{p'-q'}{p'-1}},$$
and a direct calculation leads to 
$$\left(\frac1{A_\delta(Q_I)}\int_{Q_I}w^{1-q'}(1-|z|^2)^{\delta q'}dA\right)^{q-1} \le \frac{A_\delta(Q_I)}{A_\eta(Q_I)} \left(\frac1{A_\delta(Q_I)}\int_{Q_I}w^{1-p'}(1-|z|^2)^{\eta p'}dA\right)^{p-1},$$
which finishes the proof.
\end{proof}

 Theorem \ref{t.main} and  the above remarks, together with some existing results, yield  the following addtional characterizations of $B_\infty$.

\begin{corollary}\label{c.main}
	 Let $w$ be a weight satisfying \eqref{e.regcon1}. Then the following are equivalent:
	 \begin{itemize}
	 	\item [(a)] $w\in B_\infty$.
	 	
	\item  [(b)]  There exist $p>1$ and $\eta>-1$ such that $\frac{w}{(1-|z|^2)^\eta}\in B_p(\eta)$.
	
	\item [(c) ]There exists $\gamma>0$ such that
	$$\int_\D\frac{w(\xi)}{|1-\bar{\xi}z|^{\gamma+2}}dA(\xi)\lesssim \frac{w(z)}{(1-|z|^2)^\gamma},\quad  z\in \D.$$    
	 		 	
	\item [(d)]  For all $q>0$ and all analytic functions $f$ in $\D$,  $$ \|f\|_{L^q(w)}\approx \sum_{j=0}^{n-1}|f^{j)}(0)|+ \left (\int_{\mathbb D}|f^{n)}|^{q}(1-|z|)^{qn} w dA\right)^{1/q}$$	
	 	
	 	\end{itemize}
	\end{corollary}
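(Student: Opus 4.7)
My plan is to establish the equivalences as (a) $\Leftrightarrow$ (b), (a) $\Leftrightarrow$ (c), and (c) $\Leftrightarrow$ (d), leveraging Theorem \ref{t.main}, Lemma \ref{hoelder}, and standard weighted Bergman-space techniques.

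The equivalence (a) $\Leftrightarrow$ (b) is nearly immediate from Theorem \ref{t.main}. For (a) $\Rightarrow$ (b), I would take $\eta = 0$: by Theorem \ref{t.main} (item (6)), $w \in B_\infty$ implies $w \in B_p = B_p(0)$ for some $p > 1$. For the converse, given $w/(1-|z|^2)^\eta \in B_p(\eta)$, Lemma \ref{hoelder} reduces (possibly via a symmetric H\"older argument when $-1 < \eta \le 0$) to $w \in B_q = B_q(0)$ for some $q > 1$, whence $w \in B_\infty$ again by Theorem \ref{t.main}.

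For (a) $\Rightarrow$ (c), I would dyadically decompose the integral by the pseudo-hyperbolic distance. Writing $R = 1 - |z|^2$ and $A_k = \{\xi : 2^k R \le |1 - \bar\xi z| < 2^{k+1} R\}$, each $A_k$ is contained in a Carleson cube $Q_{I_k}$ of size $\approx 2^k R$, so its contribution is bounded by $\frac{w(Q_{I_k})}{|Q_{I_k}|}\, 2^{-k\gamma} R^{-\gamma}$. Lemma \ref{l.strongd} combined with \eqref{e.regcon1} replaces $\frac{w(Q_{I_k})}{|Q_{I_k}|}$ with a pointwise value $w(z_k)$ at the top of $Q_{I_k}$, and the doubling of $w$ (a consequence of Theorem \ref{t.main}) yields $w(z_k) \lesssim C^k w(z)$; taking $\gamma > \log_2 C$ makes the resulting geometric series converge. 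For the converse (c) $\Rightarrow$ (a), I would test (c) at a point $z$ in the top half of a Carleson cube $Q_I$: since $|1-\bar\xi z| \lesssim |I|$ for $\xi \in Q_I$, one gets $\frac{w(Q_I)}{C|I|^{\gamma+2}} \lesssim \int_{Q_I}\frac{w(\xi)}{|1-\bar\xi z|^{\gamma+2}} dA \lesssim \frac{w(z)}{|I|^\gamma} \approx \frac{w(T_I)}{|Q_I|\cdot|I|^\gamma}$, i.e., $w(Q_I) \lesssim w(T_I)$. Iterating this concentration property down the Carleson tree yields geometric decay of $w$ along chains of descendants, from which $w \in B_\infty$ follows by a direct verification of the defining condition.

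The equivalence (c) $\Leftrightarrow$ (d) is the classical weighted-Bergman norm equivalence relating $f$ to its $n$-th derivative. The implication (c) $\Rightarrow$ (d) would be obtained by writing $f$ minus its degree-$(n-1)$ Taylor polynomial at $0$ as an integral of $f^{(n)}$ against a Bergman-type kernel and controlling the latter via (c); the reverse direction is standard, using Cauchy estimates and testing on reproducing kernels. The main technical obstacle of the whole proof lies in (a) $\Rightarrow$ (c), where the chain of enclosing Carleson cubes $Q_{I_k}$ carries an exponential growth factor $C^k$ from the doubling of $w$, so $\gamma$ must be chosen sufficiently large in terms of $B_\infty(w)$ and the regularity constants in \eqref{e.regcon1}; this is precisely where the hypothesis that $w$ is nearly constant on top-halves, rather than just a generic $B_\infty$ weight, plays a crucial role.
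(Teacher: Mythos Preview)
Your proposal is correct, and your treatment of (a) $\Leftrightarrow$ (b) matches the paper's exactly. The differences lie in how (c) is linked to the rest. For the forward direction the paper does not decompose the integral dyadically; instead it goes through (b): once $w\in B_q$ for some $q>1$, Bekoll\'e's theorem gives boundedness of the maximal Bergman projection $P_0^+$ on $L^q(w)$, and testing $P_0^+$ on the characteristic function of a small pseudohyperbolic disc $\Delta_z$ yields (c) with $\gamma=2q-2$ in two lines. Your direct annular decomposition works too, and is more self-contained (it avoids invoking the full strength of Bekoll\'e's weighted estimate), but it requires tracking the doubling constant against $\gamma$, whereas the paper's route makes the dependence on $q$ explicit and automatic. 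For the reverse direction the paper's argument is considerably shorter than your iteration: from (c) one reads off, for every $z\in Q_I$, the pointwise bound $\frac{1}{A_\gamma(Q_I)}\int_{Q_I}w\,dA \lesssim \frac{w(z)}{(1-|z|^2)^\gamma}$, and raising this to the power $1-p'$ and integrating in $dA_\gamma$ immediately gives $\frac{w}{(1-|z|^2)^\gamma}\in B_p(\gamma)$ for \emph{every} $p>1$, which is (b). Your route via $w(Q_I)\lesssim w(T_I)$ and geometric decay down the Carleson tree is valid and has the virtue of exhibiting the $B_\infty$ condition directly, but it is longer. Finally, for (d) the paper simply cites \cite{MR2585394}, Theorem~3.2, which gives (b) $\Leftrightarrow$ (d); your proposed direct argument (c) $\Leftrightarrow$ (d) would essentially reprove that result, so citing it (together with Proposition~\ref{p.regcond}(b) to pass from differentiable to general weights satisfying \eqref{e.regcon1}) is the more economical choice.
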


\begin{proof} If  (a) holds then (b) follows by Theorem \ref{t.main} ( (1) $\Leftrightarrow$ (5)). Conversely, if (b) holds
	with $p>1$ and $-1<\eta\le 0$, then by the simple observation preceding Lemma \ref{hoelder} we have $w\in B_p$ and  Theorem \ref{t.main} ( (1) $\Leftrightarrow$ (5)) gives (a). If  (b) holds
	with $p>1$ and $\eta> 0$, we can apply  Lemma  \ref{hoelder} to  conclude that $w\in B_q$ for some $q>1$ and (a) follows as above.  (b) $\Rightarrow (c)$  was actually proved in \cite{MR2370047}. We sketch an argument for the sake of completion. If (b) holds,  it follows by  (a) together with Theorem \ref{t.main} that $w\in B_q$ for some $q>1$. Moreover,  by the  result in \cite{MR667319} we have that  the operator $P_0^+:L^q(w)\to L^q(w)$ is bounded.\\ Given $z\in \D$, let $$\Delta_z=\{\xi:~|z-\xi|<\frac{1-|z|}{2}\},$$
and denote by $\chi_z$ its characteristic function.  If $\xi\in \Delta_z$  and $\zeta\in\D$  then
$$\frac{|1-\bar{\xi}\zeta|}{|1-\bar{z}\zeta|}\le 1+\frac{|\xi-z|}{1-|z|}<\frac3{2},$$
and similarly,
$$\frac{|1-\bar{\xi}\zeta|}{|1-\bar{z}\zeta|}\ge 1-\frac{|\xi-z|}{1-|z|}>\frac1{2}.$$
Thus
 $$P_0^+\chi_z(\zeta)\sim \frac{A(\Delta_z)}{|1-\bar{\zeta}z|^{2}}, $$
and
$$\|P_0^+\chi_z\|^q_{L^q(w)}\sim (A(\Delta_z))^q\int_\D\frac{w(\zeta)}{|1-\bar{\zeta}z|^{2q}}dA(\zeta).$$
On the other hand, $\Delta_z$ is contained in a top half $T_{I,\rho}$,  hence by \eqref{e.regcon1}  it follows that 
$$\|\chi_z\|^q_{L^q(w)}\sim w(z)A(\Delta_z).$$
Then (c) follows directly from $$\|P_0^+\chi_z\|_{L^q(w)}\le C\|\chi_z\|_{L^q(w)}.$$
Assume that (c) holds and let $I$ be an arc on $\T$. It is a simple exercise to show that for $z,\xi\in Q_I$ we have 
$$|1-\bar{\xi}z|\sim |Q_I|^{1/2}=(A_\gamma(Q_I))^{\frac1{\gamma+2}}.$$
The (c) gives for $z\in Q_I$
$$  \int_{Q_I}wdA  \lesssim A_\gamma(Q_I)\frac{w(z)}{(1-|z|^2)^\gamma} ,$$
hence for every $p>1$, 
$$\left(\frac{w(z)}{(1-|z|^2)^\gamma}\right)^{1-p'}\left(\int_{Q_I}wdA\right)^{p'-1}\lesssim (A_\gamma(Q_I))^{p'-1},\quad z\in Q_I.$$
Thus 
$$\left(\int_{Q_I}wdA\right)^{p'-1}\int_{Q_I}\left(\frac{w(z)}{(1-|z|^2)^\gamma}\right)^{1-p'}dA_\gamma(z)\lesssim (A_\gamma(Q_I))^{p'},$$
 which shows that  $\frac{w}{(1-|z|^2)^\gamma}\in B_p(\gamma)$ for all $p>1$.\\
 (b)$\Leftrightarrow$ (d) is proved in [\cite{MR2585394}, Theorem 3.2] for differentiable weights satisfying the condition in Proposition \ref{p.regcond}
 (a). Using  Proposition \ref{p.regcond}
 (b) we see that the equivalence holds for all weights satisfying  \eqref{e.regcon1}.
\end{proof}
Finally we mention an application concerning integral operators of the form
\begin{equation}\label{e.intop}
T_{g}f(z)=\int_{0}^{z}f(\xi)g'(\xi)d\xi.
\end{equation}
on the weighted Bergman spaces $L^{p}_{a}(w)=L^p(w)\cap Hol(\D),~1\le p<\infty$, where $w$ is a weight on $\mathbb D$  satisfying \eqref{e.regcon1}.
There is a vast lierature on the subject (see \cite{MR2585394} and the references therein.) The results in \cite{MR2585394} are proved for differentiable weights satisfying the condition in Proposition \ref{p.regcond}
(a), hence by  Proposition \ref{p.regcond} (b) they continue to hold for all weights satisfying  \eqref{e.regcon1}. For example, $T_g$ is bounded on $L_a^p(w)$ if and only if the symbol $g$ belongs to the Bloch space, that is $$|g'(z)|\lesssim\frac1{1-|z|}.$$ 
Using Corollary \ref{c.main} (and Proposition \ref{p.regcond} (b)) the description of the spectrum of $T_g$ provided by Theorem 5.1 in  \cite{MR2585394} can be reformulated as follows.
\begin{corollary}\label{tg}
A point $\lambda\in \mathbb{C}\setminus\{0\}$ belongs to the resolvent set of $T_g$ on $L_a^p(w)$ if and only if $w\exp(p\text{Re}\frac{g}{\lambda})\in B_\infty$.
\end{corollary}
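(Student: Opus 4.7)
The plan is to invoke Theorem 5.1 of \cite{MR2585394}, which describes the resolvent set of $T_g$ on $L^p_a(w)$ in terms of one of the Bekoll\'e-type conditions that appear as equivalent characterizations in Corollary \ref{c.main}, applied to the twisted weight $w_\lambda(z) := w(z)\exp(p\operatorname{Re}(g(z)/\lambda))$. Once the equivalence of those conditions with $B_\infty$ is in hand, the reformulation is immediate.

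First I would verify that $w_\lambda$ satisfies the regularity hypothesis \eqref{e.regcon1}, so that Corollary \ref{c.main} applies to it. Recall that for $T_g$ to act boundedly on $L^p_a(w)$, the symbol $g$ must lie in the Bloch space, i.e.\ $|g'(z)|\lesssim (1-|z|)^{-1}$. Integrating this estimate along a curve joining two points $z,\xi$ inside a $\rho$-top $T_{I,\rho}$, whose hyperbolic diameter is bounded by a constant depending only on $\rho$, yields
$$|g(z)-g(\xi)|\leq C(\rho)\,\|g\|_{\mathcal B},\qquad z,\xi\in T_{I,\rho}.$$
Therefore $\exp(p\operatorname{Re}(g/\lambda))$ is comparable to a constant on each $\rho$-top, with constants depending only on $p$, $\rho$, $|\lambda|$, and $\|g\|_{\mathcal B}$. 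Combined with the hypothesis that $w$ itself satisfies \eqref{e.regcon1}, this shows $w_\lambda$ does as well.

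Next I would translate Theorem 5.1 of \cite{MR2585394} into the present language. That theorem characterizes the points $\lambda\in\mathbb{C}\setminus\{0\}$ in the resolvent set of $T_g$ via a Bekoll\'e-type condition for $w_\lambda$ --- concretely, either via a $B_p(\eta)$ condition or via the norm equivalence in part (d) of Corollary \ref{c.main} applied to $w_\lambda$. Since Corollary \ref{c.main} asserts the equivalence of these with $w_\lambda\in B_\infty$, the resolvent criterion takes exactly the stated form.

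The main obstacle is a bookkeeping one: Theorem 5.1 of \cite{MR2585394} is stated for differentiable weights satisfying the stronger gradient condition of Proposition \ref{p.regcond}(a), whereas we only assume \eqref{e.regcon1}. To bridge this, I would apply Proposition \ref{p.regcond}(b) to replace $w_\lambda$ by a pointwise-comparable differentiable weight $\widetilde{w_\lambda}$ satisfying Proposition \ref{p.regcond}(a); since comparable weights share all the properties in Corollary \ref{c.main}, in particular membership in $B_\infty$, the characterization transfers. The only delicate point in this last step is confirming that the resolvent set itself is unchanged when $w_\lambda$ is replaced by a comparable weight in the formulation of Theorem 5.1, which follows because the operator norm of $T_g$ and of its resolvent transform comparably under such a weight change.
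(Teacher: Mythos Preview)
Your proposal is correct and follows essentially the same route as the paper: invoke Theorem~5.1 of \cite{MR2585394}, then use Corollary~\ref{c.main} to translate the Bekoll\'e-type condition on $w_\lambda=w\exp(p\operatorname{Re}(g/\lambda))$ into $w_\lambda\in B_\infty$, with Proposition~\ref{p.regcond}(b) bridging the gap between differentiable weights and weights satisfying \eqref{e.regcon1}. Your explicit verification that $w_\lambda$ inherits \eqref{e.regcon1} (via the Bloch bound on $g$) is a detail the paper leaves implicit but which is indeed needed to apply Corollary~\ref{c.main}; the only minor wrinkle is that Proposition~\ref{p.regcond}(b) should be applied to $w$ rather than to $w_\lambda$, since it is the hypothesis on $w$ in Theorem~5.1 of \cite{MR2585394} that requires the differentiable form --- replacing $w$ by a comparable $\tilde w$ leaves both $L^p_a(w)$ (hence the resolvent set) and the $B_{p_0}(\eta)$ condition on $w_\lambda$ unchanged.
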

This illustrates again the analogy to $A_\infty$, since for Hardy spaces the spectrum of $T_g$ is described in the same manner using $A_\infty$ instead (see \cite{MR3043595}).

\section{Acknowledgement}
We thank the Swedish Agency for Innovation, VINNOVA, for the partial support provided to carry out  this research through its Marie Curie Incoming project number 2014-01434 with title  
``Dyadisk harmonisk analys och viktad teori i Bergmanrummet''. The second author was also supported by VR grant 2015-05552.

\begin{bibsection}
\begin{biblist}

\bib{MR2585394}{article}{
   author={Aleman, Alexandru},
   author={Constantin, Olivia},
   title={Spectra of integration operators on weighted Bergman spaces},
   journal={J. Anal. Math.},
   volume={109},
   date={2009},
   pages={199--231},
   issn={0021-7670},
   review={\MR{2585394}},
   doi={10.1007/s11854-009-0031-2},
}

\bib{Aleman20122359}{article}{
title = {The Bergman projection on vector-valued -spaces with operator-valued weights},
journal = {Journal of Functional Analysis},
volume = {262},
number = {5},
pages = {2359 - 2378},
year = {2012},
issn = {0022-1236},
url = {http://www.sciencedirect.com/science/article/pii/S0022123611004356},
author = {Alexandru Aleman and Olivia Constantin},
}

\bib{MR3043595}{article}{
   author={Aleman, Alexandru},
   author={Pel{\'a}ez, Jos{\'e} {\'A}ngel},
   title={Spectra of integration operators and weighted square functions},
   journal={Indiana Univ. Math. J.},
   volume={61},
   date={2012},
   number={2},
   pages={775--793},
  issn={0022-2518},
}

\bib{APR}{article}{
author={Aleman, Alexandru},
    author = {Pott, S.},
    author = {Reguera, M.C.},
     title = {Sarason Conjecture on the Bergman space},
		journal={International Mathematical Research Notices, Volume 2016, No. 00, p 1--30},
       doi= {10.1093/imrn/rnw134},
}

\bib{MR667319}{article}{
   author={Bekoll{\'e}, David},
   title={In\'egalit\'e \`a poids pour le projecteur de Bergman dans la
   boule unit\'e de ${\bf C}^{n}$},
   language={French},
   journal={Studia Math.},
   volume={71},
   date={1981/82},
   number={3},
   pages={305--323},
}

\bib{MR497663}{article}{
   author={Bekoll{\'e}, David},
   author={Bonami, Aline},
   title={In\'egalit\'es \`a poids pour le noyau de Bergman},
   language={French, with English summary},
   journal={C. R. Acad. Sci. Paris S\'er. A-B},
   volume={286},
   date={1978},
   number={18},
   pages={A775--A778},
   issn={0151-0509},
}

\bib{MR2036327}{article}{
   author={Borichev, Alexander},
   title={On the Bekoll\'e-Bonami condition},
   journal={Math. Ann.},
   volume={328},
   date={2004},
   number={3},
   pages={389--398},
  issn={0025-5831},
}

\bib{MR2370047}{article}{
   author={Constantin, Olivia},
   title={Discretizations of integral operators and atomic decompositions in
   vector-valued weighted Bergman spaces},
   journal={Integral Equations Operator Theory},
   volume={59},
   date={2007},
   number={4},
   pages={523--554},
   issn={0378-620X},
}

\bib{MR2797562}{book}{
   author={Cruz-Uribe, David V.},
   author={Martell, Jos{\'e} Maria},
   author={P{\'e}rez, Carlos},
   title={Weights, extrapolation and the theory of Rubio de Francia},
   series={Operator Theory: Advances and Applications},
   volume={215},
   publisher={Birkh\"auser/Springer Basel AG, Basel},
   date={2011},
   pages={xiv+280},
   isbn={978-3-0348-0071-6},
}

\bib{MR3473651}{article}{
   author={Duoandikoetxea, Javier},
   author={Mart{\'{\i}}n-Reyes, Francisco J.},
   author={Ombrosi, Sheldy},
   title={On the $A_\infty$ conditions for general bases},
   journal={Math. Z.},
   volume={282},
   date={2016},
   number={3-4},
   pages={955--972},
   issn={0025-5874},
}

\bib{MR0481968}{article}{
   author={Fujii, Nobuhiko},
   title={Weighted bounded mean oscillation and singular integrals},
   journal={Math. Japon.},
   volume={22},
   date={1977/78},
   number={5},
   pages={529--534},
}

\bib{MR807149}{book}{
    AUTHOR = {Garc{\'{\i}}a-Cuerva, Jos{\'e}},
    author= {Rubio de Francia, Jos{\'e} L.},
     TITLE = {Weighted norm inequalities and related topics},
    SERIES = {North-Holland Mathematics Studies},
    VOLUME = {116},
      NOTE = {Notas de Matem{\'a}tica [Mathematical Notes], 104},
 PUBLISHER = {North-Holland Publishing Co.},
   ADDRESS = {Amsterdam},
      YEAR = {1985},
     PAGES = {x+604},
      ISBN = {0-444-87804-1},
}

\bib{MR3129101}{article}{
   author={Hyt{\"o}nen, Tuomas P.},
   author={Lacey, Michael T.},
   title={The $A_p$-$A_\infty$ inequality for general
   Calder\'on-Zygmund operators},
   journal={Indiana Univ. Math. J.},
   volume={61},
   date={2012},
   number={6},
   pages={2041--2092},
   issn={0022-2518},
}

\bib{MR3092729}{article}{
   author={Hyt{\"o}nen, Tuomas},
   author={P{\'e}rez, Carlos},
   title={Sharp weighted bounds involving $A_\infty$},
   journal={Anal. PDE},
   volume={6},
   date={2013},
   number={4},
   pages={777--818},
   issn={2157-5045},
}

\bib{MR2770437}{article}{
   author={Lerner, Andrei K.},
   title={Sharp weighted norm inequalities for Littlewood-Paley operators
   and singular integrals},
   journal={Adv. Math.},
   volume={226},
   date={2011},
   number={5},
   pages={3912--3926},
   issn={0001-8708},
}

\bib{MR3110501}{article}{
   author={Pott, Sandra},
   author={Reguera, Maria Carmen},
   title={Sharp B\'ekoll\'e estimates for the Bergman projection},
   journal={J. Funct. Anal.},
   volume={265},
   date={2013},
   number={12},
   pages={3233--3244},
   issn={0022-1236},
   review={\MR{3110501}},
   doi={10.1016/j.jfa.2013.08.018},
}

\bib{MR972707}{article}{
   author={Wilson, J. Michael},
   title={Weighted norm inequalities for the continuous square function},
   journal={Trans. Amer. Math. Soc.},
   volume={314},
   date={1989},
   number={2},
   pages={661--692},
   issn={0002-9947},
}

\bib{MR883661}{article}{
   author={Wilson, J. Michael},
   title={Weighted inequalities for the dyadic square function without
   dyadic $A_\infty$},
   journal={Duke Math. J.},
   volume={55},
   date={1987},
   number={1},
   pages={19--50},
   issn={0012-7094},
}

\bib{MR2359017}{book}{
   author={Wilson, Michael},
   title={Weighted Littlewood-Paley theory and exponential-square
   integrability},
   series={Lecture Notes in Mathematics},
   volume={1924},
   publisher={Springer},
   place={Berlin},
   date={2008},
   pages={xiv+224},
}

\end{biblist}
\end{bibsection}

\end{document}